\newtheorem{theorem}{Theorem}
\newtheorem{proposition}[theorem]{Proposition}
\newtheorem{corollary}[theorem]{Corollary}
\newtheorem{lemma}[theorem]{Lemma}
\theoremstyle{plain}
\newtheorem{definition}{Definition}
\theoremstyle{definition}
\newtheorem{remark}{Remark}
\newcommand{\R}{{\mathbb R}}
\newcommand{\N}{{\mathbb N}}
\newcommand{\F}{{F}}
\newcommand{\cN}{{\mathcal N}}
\renewcommand{\P}{{\mathbb P}}
\newcommand{\bomega}{{\boldsymbol{\omega}}}
\DeclareMathOperator{\supp}{supp}
\DeclareMathOperator*{\esssup}{ess\,sup}
\DeclareMathOperator*{\essinf}{ess\,inf}
\newcommand{\ee}{{\mathbb E}}
\newcommand{\comment}[1]{{}}
\begin{document}
\title[Attainable Assouad-like dimensions]{Attainable Assouad-like dimensions
of randomly generated Moran sets and measures
}

\author{Kathryn E. Hare}
\address{Dept. of Pure Mathematics, University of Waterloo, Waterloo, Ont.,
Canada, N2L 3G1}
\email{kehare@uwaterloo.ca}
\author{Franklin Mendivil}
\address{Department of Mathematics and Statistics, Acadia University,
Wolfville, N.S., Canada, B4P 2R6}
\email{franklin.mendivil@acadiau.ca}

\thanks{The research of K. Hare is partially supported by NSERC 2016:03719.
The research of F. Mendivil is partially supported by NSERC\ 2019:05237.}

\subjclass[2020]{Primary: 28A80; Secondary 28C15, 60G57}

\keywords{random non-homogeneous Moran sets, 1-variable fractals, random iterated function systems, random
measures, Assouad dimensions}

\begin{abstract}
In this paper we study the Assouad-like $\Phi$ dimensions of sets and measures 
that are constructed by a random weighted iterated function system  of similarities.
These dimensions are distinguished by the depth of the scales
considered and thus provide more refined infomation about the local
geometry/behaviour of a set or measure.
The Assouad dimensions are important well-known examples.
We determine the almost sure value of the upper and lower Assouad dimension of the random set.
We also determine the range of attainable upper and lower (small) $\Phi$ dimensions of the measures, in
both the situation where the probability weights can depend on the scaling factors and when they cannot.
In the later case we find that there is a ``gap'' between the dimension of the set and the dimensions of 
the associated family of random measures.
\end{abstract}

\maketitle


\section{Introduction}

The dimensional properties of random constructions have been a topic of
interest in fractal geometry for many years;
classic papers on the topic include \cite{Fa,Gr,Ham,MW}.
In this paper, we investigate the dimensions of
random Moran sets and measures that arise from random weighted iterated
function systems (RwIFS) and satisfy a separation condition. These are the
attractors and measures formed by randomly choosing from among a 
(possibly uncountable) collection
of iterated function systems of similarities and probabilities at each step
in the Moran construction. 
We note that these are also sometimes called one-variable fractals and
fractal measures. Early papers studying their dimensional properties include
\cite{BHS} where a formula was given for the almost sure Hausdorff dimension of
the random Moran sets and \cite{Tr17} where this value was shown to coincide with
the box dimension of the set.

In recent years there has been interest in Assouad and Assouad-like
dimensions of sets and measures (c.f. \cite{F,FH,FMT,FT,FY,GHM,HH,HM,HM2,KL,Ol,Tr17,Tr,Tr2}). 
In contrast to the Hausdorff and box dimensions,
which are global in nature, the Assouad and Assouad-like dimensions provide
information about the local `size' of the objects. In \cite{FMT} and \cite{Tr17} a
formula was given for the almost sure upper Assouad dimension of random
Moran sets. Interestingly, unlike the case for self-similar sets arising
from an IFS satisfying the open set condition, the upper Assouad dimension
does not, in general, coincide with the Hausdorff dimension of these random
attractors.

The Assouad-like $\Phi $ dimensions range between the box and Assouad
dimensions. The $\Phi$ dimensions are a family of dimensions which measure  the extremes of the local behaviour
of a set or measure by comparing two different scales, with the difference in the scales 
controlled by the choice of the function $\Phi$.
The Assouad dimensions and the $\theta$-Assouad spectrum 
are special examples. 
In \cite{Tr2} it was shown that
the upper $\Phi $ dimensions of a random Moran set arising from
equicontractive similarities almost surely coincides with the upper Assouad
dimension of the set if the dimension function 
$\Phi $ $<<\log |\log x|/|\log x|$ (`small' $\Phi $) and coincides 
with the box dimension if $\Phi >>\log |\log x|/|\log x|$ (`large' $\Phi $). 
One of the main results of this paper is to give a formula for the almost sure 
upper (and lower) Assouad dimensions of the random Moran sets and 
prove that these coincide with the upper (resp., lower) $\Phi$ dimensions of the sets 
for all small $\Phi$,
regardless of the contraction factors. 

The same threshold function and behaviour was found to apply to both the upper and lower 
$\Phi $ dimensions of random Moran measures in \cite{HM,HM2} and formulas were
given there for determining these dimensions for both the small and large $\Phi $.

Imagine that we are given a collection of iterated function systems of similarities and an
underlying random process that determines which of these IFS are chosen at each step
of the construction (so a RIFS).
Based on this, we get a family of RwIFS by using different choices of probability weights.
This produces families of random Moran measures almost all of which have the same
random Moran set as their support (the random set from the fixed RIFS). 
It is a natural question to ask about the attainable range of $\Phi$ dimensions
of these random measures.
More specifically, since the upper $\Phi $ dimension of a
measure is always at least the upper $\Phi $ dimension of its support, it is
natural to ask if the upper $\Phi $ dimension of one of these random
measures is as small as that of the supporting random set. 
In \cite{HM2} an
example was given to show that for the small $\Phi $ dimensions this need
not be the case, however it was unclear if this example was a typical or an
exceptional situation.

Here we show that for small dimension functions $\Phi $ the answer to this
question depends on the underlying random process. If the random process
does not require the probabilities to be independent of the similarities,
then almost surely the upper $\Phi $ dimension of a random Moran set agrees
with the minimal upper $\Phi $ dimension of the family of random measures it
supports. Furthermore, every greater value is attained as the upper $\Phi $
dimension of one of the random measures, except possibly the value infinity.
But if the random process requires the probabilities to be independent of
the similarities, then there is a gap between the minimal measure dimension
and the set dimension (as was the case with the example in \cite{HM2}), other
than in exceptional situations. A simple formula is given for the minimal
attainable upper $\Phi $ dimension of the family of measures and we prove
that all greater values are also attainable, except possibly infinity.

Similar statements hold for the lower $\Phi $ dimensions. Interestingly, the
maximal attainable lower $\Phi $ dimension of the random measures coincides
with the Assouad (and Hausdorff) dimensions of the deterministic, strongly
separated Moran set with contraction factors equal to the essential infimum
of the contraction factors taken over all the iterated function systems. But
this Moran set is typically not any of the random Moran sets which arise in
the random construction. 

In section 2 the reader will find basic facts about the $\Phi $ dimensions,
in section 3 the details of the random construction and in section 4 the
proofs of the main results.

\section{Assouad-like dimensions}

\label{sec:phidim}

\begin{definition}
A \textbf{dimension function} is a map $\Phi :(0,1)\rightarrow \mathbb{R}%
^{+} $ with the property that $t^{1+\Phi (t)}$ decreases to $0$ as $t$
decreases to $0$.

We will say that a dimension function $\Phi $ is \textbf{small} if 
\begin{equation*}
\Phi (t)=H(t)\frac{\log \left\vert \log t\right\vert }{\left\vert \log
t\right\vert }
\end{equation*}%
where $H(t)\rightarrow 0$ as $t\rightarrow 0$ and \textbf{large }if (with
the same notation) $H(t)\rightarrow \infty $ as $t\rightarrow 0$.
\end{definition}

Examples of small dimension functions include $\Phi (t)=0$, and the
functions $\Phi (t)=1/|\log t|^{\alpha }$ and $\Phi (t)=(\log \left\vert
\log t\right\vert )^{1/\alpha }/\left\vert \log t\right\vert $ for 
$\alpha > 1$. 
Large dimension functions include the constant functions 
$\Phi(t)=C>0$, as well as the functions $1/|\log t|^{\alpha }$ and 
$(\log\left\vert \log t\right\vert )^{1/\alpha }/\left\vert \log t\right\vert $
for $\alpha <1$.
The dichotomy ``small'' versus ''large'' corresponds to the threshold phenomena which
is observed in the almost sure values of the $\Phi$ dimensions of the random sets and measures
in our construction \cite{HM2}, and also in some other random iid constructions (see \cite{GHM2,Tr2}).

In this paper, our focus will be on the dimensions of random sets and
measures arising from small dimension functions. To recall the
definitions of these dimensions we will write$\ B(x,R)$ for the open ball
centred at $x$ belonging to the bounded metric space $X$ and radius $R,$ and
denote by $\cN_r(E)$ the least number of open balls of radius $r$ required
to cover $E\subseteq X$.

\begin{definition}
Let $\Phi $ be a dimension function.

(i) The \textbf{upper }and \textbf{lower }$\Phi $\textbf{-dimensions\ }of
the set $E\subseteq X$ are given, respectively, by 
\begin{equation*}
\overline{\dim }_{\Phi }E=\inf \left\{ 
\begin{array}{c}
d:(\exists C_{1},C_{2}>0)(\forall 0<r\leq R^{1+\Phi (R)}<R<C_{1})\text{ } \\ 
\cN_r(B(z,R)\bigcap E)\leq C_{2}\left( \frac{R}{r}\right) ^{d}\text{ }%
\forall z\in E%
\end{array}%
\right\}
\end{equation*}%
and%
\begin{equation*}
\underline{\dim }_{\Phi }E=\sup \left\{ 
\begin{array}{c}
d:(\exists C_{1},C_{2}>0)(\forall 0<r\leq R^{1+\Phi (R)}<R<C_{1})\text{ } \\ 
\cN_r(B(z,R)\bigcap E)\geq C_{2}\left( \frac{R}{r}\right) ^{d}\text{ }%
\forall z\in E%
\end{array}%
\right\} .
\end{equation*}

(ii) The \textbf{upper and lower }$\Phi $\textbf{-dimensions} of the measure 
$\mu $ on $X$ are given, respectively, by%
\begin{equation*}
\overline{\dim }_{\Phi }\mu =\inf \left\{ 
\begin{array}{c}
d:(\exists C_{1},C_{2}>0)(\forall 0<r<R^{1+\Phi (R)}\leq R\leq C_{1}) \\ 
\frac{\mu (B(x,R))}{\mu (B(x,r))}\leq C_{2}\left( \frac{R}{r}\right) ^{d}%
\text{ }\forall x\in \supp \mu 
\end{array}%
\right\} 
\end{equation*}%
and 
\begin{equation*}
\underline{\dim }_{\Phi }\mu =\sup \left\{ 
\begin{array}{c}
d:(\exists C_{1},C_{2}>0)(\forall 0<r<R^{1+\Phi (R)}\leq R\leq C_{1}) \\ 
\frac{\mu (B(x,R))}{\mu (B(x,r))}\geq C_{2}\left( \frac{R}{r}\right) ^{d}%
\text{ }\forall x\in \supp \mu 
\end{array}%
\right\} .
\end{equation*}
\end{definition}

The $\Phi $-dimensions of sets were first introduced by Fraser and Yu in 
\cite{FY} who focussed on the special case of the large dimension functions $%
\Phi _{\theta }=1/\theta -1$ for $0<\theta <1$. They called the
corresponding dimensions the upper and lower $\theta $-Assouad spectrum$.$
General $\Phi $-dimensions for sets were first extensively investigated in 
\cite{GHM}. These set dimensions motivated the $\Phi $-dimensions of
measures, introduced in \cite{HH}.

In the special case of the small dimension function $\Phi =0$, the upper and
lower $\Phi$-dimensions are known as the \textbf{upper }and \textbf{lower
Assouad dimensions} (or the Assouad dimension and lower dimension) 
and denoted $\dim _{A}\Lambda $ and $\dim _{L}\Lambda $
respectively, for sets or measures $\Lambda$. In fact, if $\Phi (t)$ is
dominated by the small dimension function $1/\left\vert \log t\right\vert $
for $t$ near $0,$ then the upper (lower) $\Phi $-dimensions coincide with
the upper (lower) Assouad dimensions of the set or measure. The Assouad
dimensions of measures are also known as the upper and lower regularity
dimensions and were studied in \cite{FH,KL, KLV}.

One obvious property of these dimensions is their ordering: If $\Phi (t)\leq
\Psi (t)$ for all $t>0$, then whether $\Lambda $ is a set or measure,%
\begin{equation*}
\overline{\dim }_{\Psi }\Lambda \leq \overline{\dim }_{\Phi }\Lambda \text{
and }\underline{\dim }_{\Phi }\Lambda \leq \underline{\dim }_{\Psi }\Lambda .
\end{equation*}%
Other well known relationships between these dimensions are stated below.
Proofs can be found in \cite{FY,GHM,HH} and the references
cited therein.

\begin{proposition}
\label{prop:dimensionproperties}
Let $E$ be a set, $\mu $ be a measure and $\Phi $ be any dimension function.
We denote by $\underline{\dim }_{B}$ ($\overline{\dim }_{B}$) the lower
(upper) box dimension and by $\dim _{H}$ the Hausdorff dimension. Then

\begin{equation*}
\dim _{L}E\leq \underline{\dim }_{\Phi }E\leq \underline{\dim }_{B}E\leq 
\overline{\dim }_{B}E\leq \overline{\dim }_{\Phi }E\leq \dim _{A}E,
\end{equation*}%
\begin{equation*}
\dim _{L}\mu \leq \underline{\dim }_{\Phi }\mu \leq \dim _{H}\mu \leq \dim
_{H}\supp\mu ,
\end{equation*}%
\begin{equation*}
\dim _{H}\supp\mu \leq \overline{\dim }_{\Phi }\ \supp\mu \leq \overline{%
\dim }_{\Phi }\mu \leq \dim _{A}\mu .
\end{equation*}%
If $\mu $ is a doubling measure, then 
\begin{equation*}
\underline{\dim }_{\Phi }\mu \leq \underline{\dim }_{\Phi }\supp\mu .
\end{equation*}
\end{proposition}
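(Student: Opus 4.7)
The plan is to partition the displayed inequalities into three groups and treat each by a short argument. Group one consists of the four ``ordering'' inequalities $\dim_L \Lambda \leq \underline{\dim}_\Phi \Lambda$ and $\overline{\dim}_\Phi \Lambda \leq \dim_A \Lambda$ for $\Lambda$ both a set and a measure; these follow at once from the monotonicity remark stated just before the proposition applied to $0 \leq \Phi$, since the Assouad dimensions correspond to the choice $\Phi \equiv 0$.

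Group two is the box/Hausdorff sandwich. For the set chain I fix any $R \in (0, C_1)$ and any $z \in E$; the lower $\Phi$-bound yields $\cN_r(E) \geq \cN_r(B(z,R) \cap E) \geq C_2 (R/r)^d$ for all small $r$, so $\underline{\dim}_B E \geq d$, while the upper $\Phi$-bound applied to each ball of a finite cover of $E$ by $R$-balls gives $\cN_r(E) \leq M_R C_2 (R/r)^d$, so $\overline{\dim}_B E \leq d$. For the measure chain, $\underline{\dim}_\Phi \mu \leq \dim_H \mu$ follows by fixing $R$ and observing that the lower $\Phi$-bound forces the lower pointwise dimension of $\mu$ to be at least $d$ at every point of $\supp\mu$, whence a standard Billingsley-type mass-distribution argument gives $\dim_H \mu \geq d$; the step $\dim_H \mu \leq \dim_H \supp\mu$ is by definition, and $\dim_H \supp\mu \leq \overline{\dim}_\Phi \supp\mu$ is the composition of $\dim_H \leq \overline{\dim}_B$ with the first group applied to $\supp\mu$ as a set.

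Group three, the transfer between the $\Phi$-dimensions of $\supp\mu$ and of $\mu$, is the heart of the proof. For $\overline{\dim}_\Phi \supp\mu \leq \overline{\dim}_\Phi \mu$ I take a maximal $r/2$-separated set $\{x_i\}_{i=1}^N$ in $B(z,R) \cap \supp\mu$, so that the disjoint balls $B(x_i, r/2)$ all lie in $B(z, 2R)$, and each $B(x_i, 4R)$ contains $B(z, 2R)$. The upper $\Phi$-bound for $\mu$ at centre $x_i$ with scales $r/2$ and $4R$ gives
\begin{equation*}
\mu(B(x_i, r/2)) \;\geq\; C_2^{-1}\, \mu(B(x_i, 4R))\, (r/(8R))^d \;\geq\; C_2^{-1}\, \mu(B(z, 2R))\, (r/(8R))^d;
\end{equation*}
summing and using disjointness forces $N \leq C (R/r)^d$, and packing/covering equivalence converts this into the required covering estimate. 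The doubling statement $\underline{\dim}_\Phi \mu \leq \underline{\dim}_\Phi \supp\mu$ is dual: if $N$ balls $B(y_i, r)$ cover $B(z,R) \cap \supp\mu$ then, after moving the centres into $\supp\mu$ at the cost of a factor of $2$ in the radius, $\mu(B(z,R)) \leq \sum_i \mu(B(y_i, 2r))$; combining the lower $\Phi$-bound for $\mu$ at $y_i$ with a doubling comparison of $\mu(B(y_i, 4R))$ to $\mu(B(z,R))$ forces $N$ to be at least a constant times $(R/r)^d$.

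The main obstacle will be the bookkeeping in this last group: the admissibility condition $r \leq R^{1+\Phi(R)}$ must be preserved under the replacements $R \mapsto kR$ and $r \mapsto r/k$ for small integer $k$, which is harmless since $(kR)^{1+\Phi(kR)}$ differs from $R^{1+\Phi(R)}$ by a factor uniformly bounded as $R \to 0$, and the doubling constant enters only a bounded number of times and is then absorbed into the final multiplicative constant. All of these arguments are standard and appear in \cite{FY, GHM, HH}, which I would cite and follow.
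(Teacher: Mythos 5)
The paper does not actually prove this proposition: it states the inequalities as ``well known relationships'' and refers the reader to \cite{FY,GHM,HH} for proofs, exactly as you do in your closing sentence. So there is no in-paper argument to compare against; what you have written is a reconstruction of the standard proofs, and in outline it is correct: the extremality of the Assouad dimensions via monotonicity in $\Phi$ applied to $0\le\Phi$, the box/Hausdorff sandwich by fixing the outer scale $R$ and letting $r\to 0$ (together with a Billingsley-type argument for $\underline{\dim}_\Phi\mu\le\dim_H\mu$), and the packing/covering transfer between $\mu$ and $\supp\mu$ are precisely the arguments in the cited references.

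Two small repairs are worth making. First, a maximal $r/2$-separated set gives pairwise \emph{distances} at least $r/2$, so the disjoint balls should have radius $r/4$, not $r/2$; this only changes constants. Second, and more substantively, your justification of the admissibility bookkeeping is not quite right: it is not guaranteed by the definition of a dimension function that $(kR)^{1+\Phi(kR)}$ and $R^{1+\Phi(R)}$ differ by a uniformly bounded factor (the definition only gives that $t\mapsto t^{1+\Phi(t)}$ is monotone, and one can build $\Phi$ for which the ratio $g(4R)/g(R)$, $g(t)=t^{1+\Phi(t)}$, is unbounded or arbitrarily close to $1$). What saves the argument is that in the transfer $\overline{\dim}_\Phi\supp\mu\le\overline{\dim}_\Phi\mu$ the replacements go in the favourable direction ($r\mapsto r/2$ shrinks, $R\mapsto 4R$ grows, and monotonicity of $t^{1+\Phi(t)}$ alone gives $r/2< r\le R^{1+\Phi(R)}\le (4R)^{1+\Phi(4R)}$), while in the doubling transfer the problematic enlargement $r\mapsto 2r$ of the \emph{small} scale should instead be absorbed by one application of the doubling condition, $\mu(B(y_i',2r))\le C_D\,\mu(B(y_i',r))$, after which the lower $\Phi$-estimate is applied at the original admissible pair $(r,R)$. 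With those adjustments your sketch is a complete and faithful version of the standard proofs the paper points to.
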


The extremal properties of the Assouad dimensions are one reason for their
importance. Another is that a measure $\mu $ is doubling if and only if $%
\dim _{A}\mu <\infty $ (\cite{FH}) and uniformly perfect if and only if $%
\dim _{L}\mu >0$ (\cite{KL}).

\section{Our model for random Moran sets and measures}

\label{sec:construction}

In this paper we study random Borel measures generated by random weighted iterated function systems.  
The results in our paper are given in the context of subsets of $[0,1]$.   
This is mostly done for clarity and to avoid extra technical details.  
Extending to $\R^d$ is straightforward with the proper modification of the geometric separation condition.
We discuss extensions briefly in Section \ref{sec:extensions}.

\medskip

We begin with a probability space $(\Omega_0,P)$ and fix the integer $K\geq 2$ and
constants $A,B,\tau >0$ with $0 < K \cdot A  \le B < 1$ along with $\tau \le (1-B)/K$. 
By a \emph{random weighted iterated function system} (RwIFS) we mean a collection 
$\{\mathcal{I}_{\omega },\mathcal{P}_{\omega }\}_{\omega \in \Omega_0 }$ where 
$\mathcal{I}_{\omega}=\{S_{1}(\omega ),...,S_{K}(\omega )\}$ is a deterministic IFS consisting
of $K$ similarities $S_{j}(\omega )$ acting on $[0,1]$ and each 
$\mathcal{P}_{\omega }=\{p_{1}(\omega ),...,p_{K}(\omega )\}$ is a set of probability weights, 
that is $p_{i}(\omega )\geq 0$ for $i=1,...,K$ and $\sum_{i=1}^{K}p_{i}(\omega )=1$.

We denote the contraction (or scaling) factor of $S_{j}(\omega )$ by $a_{j}(\omega )$ and
assume 
\begin{equation*}
     A\leq a_{j}(\omega ) \le \sum_{i=1}^K a_i(\omega) \leq B.
\end{equation*}
The \emph{equicontractive} or \emph{homogeneous} case is when $a_j(\omega) = a_i(\omega)$ for all $i$ and $j$ and each $\omega \in \Omega_0$.

Furthermore, we suppose 
\begin{equation*}
    d(S_{i}(\omega )[0,1],\, S_{j}(\omega )[0,1])\geq \tau \text{ for all }i\neq j \text{ and all } \omega
\end{equation*}%
(which is possible by our choice of $A,B,\tau $), 
so each $\mathcal{I}_{\omega }$ is a strongly separated IFS on $[0,1]$. 
We also require that there is some $t>0$ such that 
${\mathbb E}_\omega(p_{i}^{-t}(\omega ))<\infty $ for each $i=1,...,K$. 
In particular, this ensures that the probability that any $p_{i}=0$ or $1$ is zero. 

Our construction of the random set and measure is driven by an iid sequence 
$({\mathcal I}_{\bomega_n}, {\mathcal P}_{\bomega_n})$, so this means that  
$\bomega \in \Omega_0^{\N} := \Omega$ is chosen according to the infinite product measure $\P$.
Specifically this means that all pairs $\{a_{i}(\bomega _{n}),p_{j}(\bomega _{n})\}$
are independent of $\{a_{k}(\bomega _{m}),p_{l}(\bomega _{m})\}$ for $n \ne m$.

For $\bomega \in \Omega $
we will put 
\begin{equation*}
I_{i_{1},...,i_{n}}(\bomega) = 
S_{i_{1}}(\bomega _{1})\circ \cdots \circ S_{i_{n}}(\bomega _{n})[0,1].
\end{equation*}%
This is a \textit{step (or level) }$n$\textit{\ Moran interval} of length $%
a_{i_{1}}(\bomega _{1}) \cdots a_{i_{n}}(\bomega _{n})$. We call the
step $n+1$ Moran intervals $I_{i_{1},..i_{n},j}(\bomega)$ for $%
j=1,...,K$ the \textit{children} of the step $n$ Moran interval $%
I_{i_{1},...,i_{n}}(\bomega)$ and note that the distance between
any two children of $I_{i_{1},...,i_{n}}(\bomega)$ is at least $%
\tau \left\vert I_{i_{1},...,i_{n}}(\bomega)\right\vert $. 

Corresponding to each $\bomega \in \Omega$
 is the \textit{random attractor } 
\begin{equation*}
     F_\bomega=\dbigcap\limits_{n}\dbigcup\limits_{i_{1},...,i_{n}\in
\{1,...,K\}}I_{i_{1},...,i_{n}}(\bomega),
\end{equation*}%
also known as a \textit{random Moran set}. 
When $\bomega \in \Omega$ is clear we may write $I_{n}(x)$ for the unique step $n$
Moran interval that contains $x\in F_{\bomega}$ and we observe that 
\begin{equation*}
A^{n}\leq \frac{\left\vert I_{N}(x)\right\vert }{\left\vert
I_{N+n}(x)\right\vert }\leq B^{n}\text{.}
\end{equation*}

Our \textit{random probability measure} $\mu _\bomega$ is given
by the rule $\mu _{\bomega}([0,1])=1$ and 
\begin{eqnarray*}
\mu _{\bomega}(I_{i_{1},...,i_{n}})(\bomega) &=&
  \mu _{\bomega}(I_{i_{1},...,i_{n-1}}(\bomega)) p_{i_{n}}(\bomega_n) \\
&=&p_{i_{1}}(\bomega _{1}) \cdots p_{i_{n}}(\bomega _{n}).
\end{eqnarray*}%
Almost surely, its support is the random set $F_{\mathbf{\omega }}.$

For those familiar with V-variable fractals (c.f. [2]), $F_{\mathbf{\omega }%
} $ is a 1-variable fractal and $\mu _{\mathbf{\omega }}$ a 1-variable
fractal measure.

We remark that $\F_{\bomega }$ has a \textquotedblleft uniform
separation\textquotedblright\ property in the sense that the distance
between any two children of $I_{N}$ is at least $\tau |I_{N}|$. 
Given the separation conditions on the construction, it is not hard to 
show that the $\Phi$ dimensions of both the random
set $\F_\bomega$ and the random measure $\mu_\bomega$ can 
be computed using only the lengths and measures of the Moran intervals (see \cite[Lemma 2]{HM2}).

In \cite{HM2} simple formulas are given for the almost sure small $\Phi $%
-dimensions of these measures.

\begin{theorem}[\protect{\cite[Theorem 21]{HM2}}]
\label{asdimmeasure} 
There is a set of full $\P$ measure $\Gamma \subseteq \Omega$ such that for all 
$\bomega \in \Gamma$ and all small
dimension functions $\Phi $ we have%
\begin{equation*}
  \dim_A \mu_\bomega = \overline{\dim }_{\Phi }\mu_\bomega=
 \max_{j=1,...,K}\left( \esssup_{\omega \in \Omega_0}
   \frac{\log p_{j}(\omega )}{\log a_{j}(\omega )}\right)
\end{equation*}
and
\begin{equation*}  
   \dim_L \mu_\bomega = \underline{\dim }_{\Phi }\mu_\bomega=
     \min_{j=1,...,K}\left( \essinf_{\omega \in \Omega_0}
    \frac{\log p_{j}(\omega )}{\log a_{j}(\omega )}\right).
\end{equation*}
\end{theorem}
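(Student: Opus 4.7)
The plan rests on the product formula for $\mu_{\bomega}$ on Moran intervals together with the fact (\cite[Lemma~2]{HM2}) that the strong separation lets one compute all of the $\Phi$-dimensional quantities of $\mu_{\bomega}$ from the ratios of $\mu_{\bomega}$-measure to length across nested pairs of Moran intervals. For any pair $I_{N+n}(x)\subseteq I_N(x)$,
\begin{equation*}
  \frac{\mu_{\bomega}(I_N(x))}{\mu_{\bomega}(I_{N+n}(x))}
  =\prod_{k=N+1}^{N+n}\frac{1}{p_{i_k}(\bomega_k)},
  \qquad
  \frac{|I_N(x)|}{|I_{N+n}(x)|}
  =\prod_{k=N+1}^{N+n}\frac{1}{a_{i_k}(\bomega_k)},
\end{equation*}
so comparing $\mu(I_N)/\mu(I_{N+n})$ with $(|I_N|/|I_{N+n}|)^d$ amounts to comparing $\log p_{i_k}(\bomega_k)/\log a_{i_k}(\bomega_k)$ with $d$. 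Writing $M_u:=\max_j\esssup_\omega(\log p_j/\log a_j)$ and $M_l:=\min_j\essinf_\omega(\log p_j/\log a_j)$, Proposition~\ref{prop:dimensionproperties} gives $\overline{\dim}_\Phi\mu_{\bomega}\le\dim_A\mu_{\bomega}$ and $\underline{\dim}_\Phi\mu_{\bomega}\ge\dim_L\mu_{\bomega}$ for every $\Phi\ge 0$, so it is enough to prove the sandwiches $M_u\le\overline{\dim}_\Phi\mu_{\bomega}$ and $\dim_A\mu_{\bomega}\le M_u$, together with the symmetric pair for $M_l$.

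The outer bounds are the easy side. Fix a rational $d>M_u$; by definition of essential supremum, for each $j$ the event $\{p_j(\omega)<a_j(\omega)^d\}$ has $P$-measure zero, and since only countably many $j$ and $k$ are involved, $\P$-almost surely $p_j(\bomega_k)\ge a_j(\bomega_k)^d$ for every $j$ and every level $k$. Multiplying over any run gives $\mu(I_N)/\mu(I_{N+n})\le(|I_N|/|I_{N+n}|)^d$ for every nested pair, so $\dim_A\mu_{\bomega}\le d$; intersecting over a rational sequence $d\downarrow M_u$ yields $\dim_A\mu_{\bomega}\le M_u$ a.s., and the symmetric argument with essential infima gives $\dim_L\mu_{\bomega}\ge M_l$ a.s.

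For the inner bound $\overline{\dim}_\Phi\mu_{\bomega}\ge M_u$ valid for every small $\Phi$, fix a rational $d<M_u$, choose an index $j^*$ with $\esssup_\omega(\log p_{j^*}/\log a_{j^*})>d$, and set $q:=P\bigl(p_{j^*}(\omega)\le a_{j^*}(\omega)^d\bigr)>0$. Pick an integer $m$ with $1+(\log q)/m>0$; a standard second-Borel--Cantelli argument applied to independent disjoint blocks of length $\lfloor(\log N)/m\rfloor$ then shows that $\P$-a.s.\ there are infinitely many $N$ admitting a run $[N+1,N+n]$ of length $n=n(N):=\lfloor(\log N)/m\rfloor$ along which $p_{j^*}(\bomega_k)\le a_{j^*}(\bomega_k)^d$ for every $k$ in the run. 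Choosing $i_k=j^*$ along such a run and extending arbitrarily before it (which is possible $\P$-a.s.\ because $\mathbb{E}_\omega p_j^{-t}<\infty$ forces $p_j>0$ a.s.) produces nested Moran intervals with $\mu(I_N)/\mu(I_{N+n})\ge(|I_N|/|I_{N+n}|)^d$. For every small $\Phi$, the scale condition $r\le R^{1+\Phi(R)}$ reduces to $n\log(1/B)\ge H(R)\log|\log R|$, which is eventually automatic since $H(R)\to 0$ while $\log|\log R|\asymp\log N\asymp n/?$; crucially, one choice of $n$ linear in $\log N$ satisfies the scale requirement for \emph{every} small $\Phi$ at once. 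Thus infinitely many such pairs witness $\overline{\dim}_\Phi\mu_{\bomega}\ge d$, and intersecting over rational $d<M_u$ gives a single full-measure set on which $\overline{\dim}_\Phi\mu_{\bomega}\ge M_u$ for every small $\Phi$. The dual inequality $\underline{\dim}_\Phi\mu_{\bomega}\le M_l$ is the mirror argument using runs of events $\{p_{j_*}(\bomega_k)\ge a_{j_*}(\bomega_k)^d\}$ for rational $d>M_l$ and the minimising index $j_*$.

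The main obstacle is that the Borel--Cantelli step a priori depends on $\Phi$ through the required gap between the scales $R$ and $r$, whereas the conclusion must hold for \emph{every} small $\Phi$ on a single null set. The resolution is that smallness of $\Phi$ forces $H(R)\log|\log R|=o(\log N)$, so any run length $n(N)$ of linear order in $\log N$ automatically outpaces the scale requirement for all small $\Phi$ simultaneously once $N$ is large; this replaces the uncountable quantification over $\Phi$ by a countable intersection over rationals $d$ and integers $m$.
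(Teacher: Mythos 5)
This statement is quoted from \cite[Theorem 21]{HM2} and is not proved in the present paper, so there is no in-paper proof to compare against; judged on its own, your argument is correct and uses exactly the machinery this paper relies on elsewhere, namely the reduction to Moran intervals via \cite[Lemma 2]{HM2} together with a second Borel--Cantelli argument over independent blocks of length of order $\log N$ (the same device as in the paper's proof of Theorem~\ref{thm:asdimset}), and your observation that a run length linear in $\log N$ beats the scale requirement $H(R)\log|\log R| = o(\log N)$ for all small $\Phi$ simultaneously is precisely the point that makes the conclusion uniform over $\Phi$ on a single full-measure set. The only blemishes are cosmetic: the garbled ``$\asymp n/?$'' clause, and the fact that you should verify explicitly that the sum of the block probabilities $q^{n_k}$ diverges when the consecutive blocks have length $\lfloor(\log N_k)/m\rfloor$ with $m>|\log q|$ (it does, since then $q^{n_k}\geq N_k^{-\alpha}$ with $\alpha<1$ and $N_k$ grows at most like $k\log k$).
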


For each $\omega \in \Omega _{0}$, let $d_{\omega }\geq 0$ satisfy 
\[
   \sum_{j=1}^{K}a_{j}(\omega )^{d_{\omega }}=1.
\]
Notice that $d_{\omega }$ is the Hausdorff (and upper and lower Assouad)
dimension of the self-similar Cantor subset of $[0,1]$ with contraction
ratios $a_j(\omega )$, $j=1,2,\ldots, K$.
The almost sure Hausdorff dimension of $F_\bomega$ is given by the unique $s$ which
satisfies $\ee_\omega \log\left( \sum_{j=1}^K a_j(\omega)^s \right) = 0$ (see, for instance, \cite{BHS}).

\medskip

\begin{proposition}  \label{prop:lowerphi}
Almost surely $\underline{\dim }_\Phi \mu _\bomega\leq \essinf_{\omega \in \Omega _{0}}d_{\omega }$ and $\esssup_{\omega \in \Omega _{0}}d_{\omega } \le \overline{\dim}_\Phi \mu_\bomega$ for all small dimension functions $\Phi$.
\end{proposition}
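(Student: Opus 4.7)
My plan is to combine Theorem \ref{asdimmeasure} with a simple comparison between the two probability distributions $\bigl(p_j(\omega)\bigr)_{j=1}^K$ and $\bigl(a_j(\omega)^{d_\omega}\bigr)_{j=1}^K$ on the index set $\{1,\ldots,K\}$.

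First, I would invoke Theorem \ref{asdimmeasure} to replace $\underline{\dim}_\Phi \mu_\bomega$ and $\overline{\dim}_\Phi \mu_\bomega$ by the deterministic quantities
\[
\min_{j}\essinf_{\omega\in\Omega_0}\frac{\log p_j(\omega)}{\log a_j(\omega)}\quad\text{and}\quad \max_{j}\esssup_{\omega\in\Omega_0}\frac{\log p_j(\omega)}{\log a_j(\omega)},
\]
valid almost surely and simultaneously for every small $\Phi$ on a single set of full $\P$-measure $\Gamma$. This reduces the problem to a pointwise comparison in $\omega\in\Omega_0$.

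Next, for a fixed $\omega$, I would exploit that $\sum_j p_j(\omega)=1=\sum_j a_j(\omega)^{d_\omega}$. A standard pigeonhole argument on two probability vectors shows that there must exist indices $j_-,j_+\in\{1,\ldots,K\}$ with $p_{j_-}(\omega)\le a_{j_-}(\omega)^{d_\omega}$ and $p_{j_+}(\omega)\ge a_{j_+}(\omega)^{d_\omega}$ (otherwise summation gives a strict inequality $1>1$ or $1<1$). Since each $a_j(\omega)\in(0,1)$ so $\log a_j(\omega)<0$, taking logarithms and dividing by $\log a_j(\omega)$ reverses the inequalities, giving
\[
\min_{j}\frac{\log p_j(\omega)}{\log a_j(\omega)}\le \frac{\log p_{j_+}(\omega)}{\log a_{j_+}(\omega)}\le d_\omega \le \frac{\log p_{j_-}(\omega)}{\log a_{j_-}(\omega)}\le \max_{j}\frac{\log p_j(\omega)}{\log a_j(\omega)}.
\]

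Finally, I would take $\essinf_\omega$ on the left chain and $\esssup_\omega$ on the right, using that for a finite collection of measurable functions one has $\essinf\min_j f_j = \min_j \essinf f_j$ and $\esssup\max_j f_j = \max_j \esssup f_j$ (since the union of the finitely many $\omega$-null sets is null). This yields
\[
\min_{j}\essinf_\omega \frac{\log p_j(\omega)}{\log a_j(\omega)}\le \essinf_\omega d_\omega\quad\text{and}\quad \esssup_\omega d_\omega \le \max_{j}\esssup_\omega \frac{\log p_j(\omega)}{\log a_j(\omega)},
\]
which are exactly the claimed inequalities after substituting back the $\Phi$-dimension expressions. There is no real obstacle here; the only care needed is to ensure that the almost sure statement holds on a single event $\Gamma$ working for every small $\Phi$, which Theorem \ref{asdimmeasure} already provides.
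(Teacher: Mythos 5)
Your proposal is correct and follows essentially the same route as the paper: both reduce to the formulas of Theorem \ref{asdimmeasure} and then observe that, since $\bigl(p_j(\omega)\bigr)_j$ and $\bigl(a_j(\omega)^{d_\omega}\bigr)_j$ are both probability vectors, the ratios $\log p_j(\omega)/\log a_j(\omega)$ cannot all lie strictly above (or all strictly below) $d_\omega$. Your write-up merely makes the pigeonhole step and the interchange of $\min$/$\essinf$ (resp.\ $\max$/$\esssup$) more explicit than the paper does.
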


\begin{proof}
For each $\omega \in \Omega _{0}$, set 
\begin{equation*}
t_{j}(\omega )=\frac{\log p_{j}(\omega )}{\log a_{j}(\omega )}
\end{equation*}%
so that $p_{j}(\omega )=a_{j}(\omega )^{t_{j}(\omega )}$. Since $%
\sum_{j=1}^{K}a_{j}(\omega )^{d_{\omega }}=1=\sum_{j}p_{j}(\omega )$, there
cannot be any $\omega $ such that $t_{j}(\omega )>d_{\omega }$ or $t_j(\omega) < d_\omega$ for all $j$.
Therefore Theorem \ref{asdimmeasure} gives 
\[
\underline{\dim }_{\Phi }\mu _{{\boldsymbol{\omega }}}=
 \min_{j=1,...,K}\left( \essinf t_{j}(\omega )\right) \leq 
  \essinf d_{\omega}\text{ a.s. }
\]
and
\[
\overline{\dim }_{\Phi }\mu _\bomega=
 \max_{j=1,...,K}\left( \esssup t_{j}(\omega )\right) \ge 
  \esssup d_{\omega}\text{ a.s. }
\]
\end{proof}


\section{\protect\bigskip Dimension of random set and attainable measure dimensions}

\label{sec:SmallPhi}

Now we turn to our main results, which are split as to whether
the probability weights are allowed to depend on the scaling factors or not.
We also determine the almost sure small $\Phi$ dimension of the underlying random
set.

\subsection{The dependent probabilities case.} 
\label{sec:dependent}

We begin with the most general case when the probabilities may depend on the contraction
factors. Let 
\begin{equation} \label{eq:d_Delta}
D=\esssup_{\omega \in \Omega _{0}}d_{\omega } \quad \mbox{ and } \quad 
 d=\essinf_{\omega \in \Omega _{0}}d_{\omega }.  
\end{equation}%

\begin{theorem}
\label{thm:deprange}
With $d$ and $D$ defined as in (\ref{eq:d_Delta}), for any $r \in [D,\infty)$ (resp., $r \in (0,d]$), 
there is a choice of probabilities $p_i(\omega)$,
depending on the contraction factors $a_j(\omega)$ and strictly bounded away from zero, such that
almost surely
\[
   \overline{\dim}_\Phi \mu_\bomega = r \geq \overline{\dim}_\Phi F_\bomega \  (\mbox{resp.,} \  
   \underline{\dim}_\Phi \mu_\bomega = r \le \underline{\dim}_\Phi F_\bomega
 )
\] 
 simultaneously for all small dimension functions $\Phi$.
\end{theorem}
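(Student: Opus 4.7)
The plan is to apply Theorem \ref{asdimmeasure}, which converts the question into one about the exponents $t_j(\omega) := \log p_j(\omega)/\log a_j(\omega)$: to obtain $\overline{\dim}_\Phi \mu_\bomega = r$ almost surely (for all small $\Phi$ simultaneously) it suffices to arrange $\max_j \esssup_\omega t_j(\omega) = r$, and analogously $\min_j \essinf_\omega t_j(\omega) = r$ in the lower case. The inequality $\overline{\dim}_\Phi F_\bomega \le \overline{\dim}_\Phi \mu_\bomega$ is then automatic from Proposition \ref{prop:dimensionproperties} because $\supp \mu_\bomega = F_\bomega$ a.s. In the lower case, the reverse inequality $\underline{\dim}_\Phi \mu_\bomega \le \underline{\dim}_\Phi F_\bomega$ is obtained from the same proposition, provided $\mu_\bomega$ is doubling — equivalently, $\dim_A \mu_\bomega < \infty$ a.s. — which I will verify at the end from the explicit construction.

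In both directions I would use the same single recipe: fix $r$ and set
\[
   p_1(\omega) = a_1(\omega)^r, \qquad p_j(\omega) = c_\omega \, a_j(\omega)^{d_\omega} \quad (j = 2,\ldots,K),
\]
where $c_\omega = (1 - a_1(\omega)^r)/(1 - a_1(\omega)^{d_\omega})$ is the unique normalization producing $\sum_j p_j(\omega) = 1$. By construction $t_1(\omega) \equiv r$, and a short computation gives
\[
   t_j(\omega) = d_\omega + \frac{\log c_\omega}{\log a_j(\omega)} \quad \text{for } j \ge 2.
\]
When $r \ge D$, one has $a_1(\omega)^r \le a_1(\omega)^{d_\omega}$ a.s., so $c_\omega \ge 1$ and the second term is nonpositive, giving $t_j(\omega) \le d_\omega \le D \le r$; hence $\max_j \esssup_\omega t_j = r$. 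When $r \le d$, the inequalities reverse: $c_\omega \le 1$, so $t_j(\omega) \ge d_\omega \ge d \ge r$, and $\min_j \essinf_\omega t_j = r$. Theorem \ref{asdimmeasure} then delivers the claimed almost sure dimension values simultaneously for every small $\Phi$.

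The routine bookkeeping checks three things. First, $\sum p_j = 1$ is immediate from the definition of $c_\omega$. Second, the $p_j$ are bounded away from $0$: the constraints $A \le a_j(\omega) \le B$ and $\sum_j a_j(\omega)^{d_\omega} = 1$ force $d_\omega$ into a bounded interval (namely $[\log K/\log(1/A), \log K/\log(1/B)]$), so $a_j(\omega)^{d_\omega}$ is bounded below; meanwhile $c_\omega$ is bounded below by $\min(1, 1 - B^r) > 0$ and bounded above because $1 - a_1(\omega)^{d_\omega}$ is bounded away from $0$. Third, the same bounds make $\log c_\omega / \log a_j(\omega)$ uniformly bounded in $\omega$ and $j$, so each $t_j$ is uniformly bounded and $\dim_A \mu_\bomega < \infty$ a.s., giving the required doubling in the lower case.

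The only conceptual subtlety is the asymmetric role of the coordinate $j = 1$: the choice $p_1 = a_1^r$ pins $t_1$ rigidly at $r$, while rescaling the other weights by one common factor $c_\omega$ uses the monotonicity of $s \mapsto a_1^s$ to automatically keep the remaining exponents on the correct side of $r$. Once this is recognized, the argument is essentially algebraic, and I foresee no serious technical obstacle beyond verifying the uniform bounds sketched above.
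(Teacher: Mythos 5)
Your proposal is correct and follows essentially the same strategy as the paper: set $p_1(\omega)=a_1(\omega)^r$ to pin the first exponent at $r$, distribute the remaining mass among $j\ge 2$ so that those exponents stay at most $d_\omega\le D$ (resp.\ at least $d_\omega\ge d$), then invoke Theorem~\ref{asdimmeasure}, using the doubling property for the lower-dimension inequality. The only difference is cosmetic: the paper finds an exponent $r_\omega\le d_\omega$ with $\sum_{j\ge 2}a_j(\omega)^{r_\omega}=1-a_1(\omega)^r$ via the intermediate value theorem, whereas you normalize by the explicit multiplicative constant $c_\omega$; both give probabilities uniformly bounded away from zero.
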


\begin{proof}
Let $r\in [D,\infty )$ and set $p_{1}(\omega )=a_{1}(\omega )^{r}$ for each $\omega \in \Omega_0$.
Almost surely over $\omega \in \Omega_0$, we have $p_{1}(\omega )\leq a_{1}(\omega )^{D}\leq a_{1}(\omega
)^{d_{\omega }}$. Thus%
\[
  p_{1}(\omega )+\sum_{j=2}^{K}a_{j}(\omega )^{d_{\omega }} \leq
     \sum_{j=1}^{K}a_{j}(\omega )^{d_{\omega }}=1\text{ a.s. and }
   p_{1}(\omega)+\sum_{j=2}^{K}a_{j}(\omega )^{0}>1,
\]
hence by continuity, for almost all $\omega \in \Omega_0$ there is some 
$r_{\omega }\leq d_{\omega }$ such that 
\[
    p_{1}(\omega )+\sum_{j=2}^{K}a_{j}(\omega )^{r_{\omega }}=1.
\]
Put $p_{j}(\omega )= a_{j}(\omega )^{r_{\omega }}$ for $j\neq 1$. 
As $\sum_{j=1}^{K}p_{j}(\omega )=1$, this defines probabilities 
$(p_{j}(\omega))_{j=1}^{K}$. 
Notice that $p_j(\omega) \ge \min\{ A, A^r \}$ since $a_j(\omega) \ge A$ and $r_\omega \le d_\omega \le 1$.
(For the measure zero set of $\omega $ where $r_{\omega }$
might not be defined just choose equal probabilities.)
 Because 
\[
   \max_{j=1,...,K}\left( \esssup \frac{\log p_{j}(\omega )}{\log a_{j}(\omega )}\, \right) 
 =\esssup \frac{\log p_{1}(\omega )}{\log a_{1}(\omega )}=r,
\]
we have $\overline{\dim }_{\Phi }\mu_\bomega=r$ a.s. according to Theorem \ref{asdimmeasure}.

The symmetric argument proves the result for the lower $\Phi $ dimensions.
For this we need to note that since the chosen probabilities are uniformly bounded away from zero, the resulting measure is 
always doubling and hence $\underline{\dim}_\Phi \mu_\bomega \le \underline{\dim}_\Phi F_\bomega$.
\end{proof}

\begin{remark}
Whether or not $0$ or $\infty $ can be achieved as dimensions of measures
depends on the underlying probability space. See Remark \ref{ZeroInf} for
more discussion on this.
\end{remark}

\subsection{Almost sure dimension of the random set}
\label{subsec:smallrandomset}

Our argument for obtaining the lower bound in the proof of Theorem \ref{thm:asdimset} is motivated by the argument originally from \cite{Ol}
and then modified in \cite[Section 6.2]{FMT}.
We first need to set up notation so that we can state a useful combinatorial estimate (inspired by \cite{Ol}).

\smallskip

Let $\Lambda = \{1,2,\ldots, K\}$ and ${\mathcal T} = {\displaystyle \bigcup_{i =0}^n } \Lambda^i$ be a $K$-ary tree of depth $n$.
For each $1 \le j \le K$ and $1 \le \ell \le n$, let $c_{j,\ell} \in [A,B] \subset (0,1)$ with $\sum_{j=1}^K c_{j,\ell} \le B$ for all $\ell$.

For each node $\sigma = \sigma_1 \sigma_2 \ldots \sigma_m \in \Lambda^m \subset {\mathcal T}$, we define:
\begin{enumerate}
   \item[(i)] $\hat{\sigma} := \sigma_1 \sigma_2 \ldots \sigma_{m-1}$ as the \emph{predecessor} of $\sigma$ and 

   \item[(ii)] $c_\sigma := c_{\sigma_1,1} \cdot c_{\sigma_2,2} \cdot c_{\sigma_3,3} \cdots c_{\sigma_m,m}$ as the label on the node $\sigma \in {\mathcal T}$.

\end{enumerate}
We omit the proof of this next lemma, as it is similar to the proof of Lemma 2.2 in \cite{Ol}.

\begin{lemma}[Geometric Lemma] \label{lem:geometric}
 Suppose that ${\mathcal T}$ is a labeled $K$-ary tree of depth $n$ (as above).
Let $0 < r < R$ be such that $r \ge c_\sigma R$ whenever $|\sigma| = n$.
Suppose that for some $s \ge 0$ we have $\displaystyle \sum_{\ell=1}^K c_{j,\ell}^s \ge 1$ for all $\ell$ and
let 
\[
       \Upsilon_r = \{ \sigma \in {\mathcal T}:   c_\sigma \le \frac{2 }{\tau A} \frac{r}{R} < c_{\hat{\sigma}}\}.
\]
Then $|\Upsilon_r| \ge C (R/r)^s$ with $C = (\tau A/2)^s > 0$.
\end{lemma}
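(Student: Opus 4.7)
The plan is to prove Lemma 3 by three standard steps: show that $\Upsilon_r$ is a maximal antichain (a ``cut set'') in the tree ${\mathcal T}$; bound $\sum_{\sigma \in \Upsilon_r} c_\sigma^s$ from below by $1$ using the hypothesis $\sum_j c_{j,\ell}^s \geq 1$; and finally convert this sum bound into a cardinality bound by using the upper bound on $c_\sigma$ built into the definition of $\Upsilon_r$. Put $T := \tfrac{2}{\tau A} \tfrac{r}{R}$ so that $\Upsilon_r = \{\sigma : c_\sigma \le T < c_{\hat\sigma}\}$.

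First I would verify that $\Upsilon_r$ is indeed a cut set, meaning an antichain such that every root-to-leaf path meets $\Upsilon_r$ in exactly one node. The antichain property is immediate: if $\sigma \in \Upsilon_r$, then any proper descendant $\sigma'$ of $\sigma$ has $c_{\hat{\sigma'}} \le c_\sigma \le T$, so the defining strict inequality $T < c_{\hat{\sigma'}}$ fails. For the covering property, I would descend from the root along a leaf path and mark the first node $\sigma$ for which $c_\sigma \le T$; then $c_{\hat\sigma} > T$ by construction. Such a node exists on each path because every leaf $\sigma$ of depth $n$ satisfies $c_\sigma \le r/R$ (by the hypothesis $r \ge c_\sigma R$), and $r/R < T$ since $\tau A \le \tau \le (1-B)/K < 1$ forces $\tau A/2 < 1$.

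Next I would establish the fundamental inequality
\[
\sum_{\sigma \in \Upsilon_r} c_\sigma^s \ge 1.
\]
The clean way is an induction that starts from the root cut set $\{\emptyset\}$, for which the sum is $c_\emptyset^s = 1$, and obtains $\Upsilon_r$ by finitely many ``expansions,'' namely replacing a node $\tau$ of some level $m$ by its $K$ children. Each expansion changes the sum by an additive factor of
\[
\sum_{j=1}^K c_{\tau j}^s - c_\tau^s = c_\tau^s\!\left(\sum_{j=1}^K c_{j,m+1}^s - 1\right) \ge 0,
\]
using the hypothesis $\sum_j c_{j,\ell}^s \ge 1$. Since any maximal antichain in a finite rooted tree — in particular $\Upsilon_r$ — is reached from $\{\emptyset\}$ by such a sequence of expansions, the sum is nondecreasing throughout and the lower bound follows.

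Finally I would combine the two observations. For every $\sigma \in \Upsilon_r$ we have $c_\sigma \le T$, hence $c_\sigma^s \le T^s$, so
\[
|\Upsilon_r| \cdot T^s \ge \sum_{\sigma \in \Upsilon_r} c_\sigma^s \ge 1,
\]
which gives $|\Upsilon_r| \ge T^{-s} = (\tau A/2)^s (R/r)^s = C(R/r)^s$, as required. The only genuinely delicate step is the first one — confirming that $\Upsilon_r$ really is a cut set reaching every leaf before termination — since it is the step where the quantitative relationship among $r$, $R$, $\tau$, and $A$ must be used; the mass estimate in step two and the final cardinality bound are then essentially formal.
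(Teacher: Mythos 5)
The paper omits its own proof of this lemma (deferring to Lemma~2.2 of \cite{Ol}), and your argument --- $\Upsilon_r$ is a cut set, the branching hypothesis $\sum_{j}c_{j,\ell}^s\ge 1$ makes the mass $\sum_{\sigma\in\Upsilon_r}c_\sigma^s\ge 1$ via monotonicity under node expansions, and the pointwise bound $c_\sigma\le \frac{2}{\tau A}\frac{r}{R}$ converts mass into cardinality --- is exactly that standard argument and is correct. The only loose end is the degenerate case $\frac{2}{\tau A}\frac{r}{R}\ge 1=c_\emptyset$, in which your ``first node with $c_\sigma\le T$'' is the root and $\hat\sigma$ is undefined; this regime never arises in the paper's application (there $r/R\le B^{\chi_{N,i}}$ is small), so it is immaterial.
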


\begin{theorem} \label{thm:asdimset} 
There is a set $\Gamma \subset \Omega$ of full measure, such that $\overline{\dim}_\Phi F_\bomega = D$
 and $\underline{\dim}_\Phi F_\bomega = d$ for all $\bomega \in \Gamma$ and for all small
dimension functions $\Phi$.
\end{theorem}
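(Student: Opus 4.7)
The proof splits into four inequalities: $\overline{\dim}_\Phi F_\bomega \le D$, $\overline{\dim}_\Phi F_\bomega \ge D$, $\underline{\dim}_\Phi F_\bomega \ge d$, and $\underline{\dim}_\Phi F_\bomega \le d$, all to hold almost surely and simultaneously for every small $\Phi$. The two ``sandwich'' inequalities $\overline{\dim}_\Phi F_\bomega \le D$ and $\underline{\dim}_\Phi F_\bomega \ge d$ will follow at once from Theorem \ref{thm:deprange} combined with Proposition \ref{prop:dimensionproperties}. Applying Theorem \ref{thm:deprange} with $r = D$ produces, almost surely, a measure $\mu_\bomega$ supported on $F_\bomega$ with $\overline{\dim}_\Phi \mu_\bomega = D$ for all small $\Phi$, so the support inequality $\overline{\dim}_\Phi \supp\mu \le \overline{\dim}_\Phi \mu$ of Proposition \ref{prop:dimensionproperties} yields $\overline{\dim}_\Phi F_\bomega \le D$; taking $r = d$ similarly produces a doubling measure (since its probabilities are bounded away from zero) with $\underline{\dim}_\Phi \mu_\bomega = d$, and the doubling-measure inequality $\underline{\dim}_\Phi \mu \le \underline{\dim}_\Phi \supp\mu$ gives $\underline{\dim}_\Phi F_\bomega \ge d$.

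For $\overline{\dim}_\Phi F_\bomega \ge D$ the plan is to use Lemma \ref{lem:geometric} directly. Fix a rational $\epsilon > 0$. Since $\{d_\omega \ge D - \epsilon/2\}$ has positive probability $p$, the second Borel--Cantelli lemma applied to non-overlapping length-$n_0$ blocks shows that, for each fixed $n_0$, almost surely there are infinitely many $N$ with $d_{\bomega_{N+\ell}} \ge D - \epsilon/2$ for $\ell = 1,\dots,n_0$; intersecting over countably many $(\epsilon,n_0)$ produces the desired full-measure set $\Gamma$. For such an $N$ I will set $R = |I_N(\bomega)|$, $r = A^{n_0} R$, $z \in F_\bomega \cap I_N(\bomega)$, and apply Lemma \ref{lem:geometric} to the depth-$n_0$ subtree rooted at $I_N(\bomega)$ with labels $c_{j,\ell} = a_j(\bomega_{N+\ell})$ and exponent $s = D - \epsilon/2$; the hypothesis $\sum_j c_{j,\ell}^s \ge 1$ holds because $\sum_j a_j(\omega)^{d_\omega} = 1$ and $s \le d_{\bomega_{N+\ell}}$. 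The separation condition forces distinct $\sigma,\sigma' \in \Upsilon_r$ to have a common ancestor of size exceeding $(2/\tau A)(r/R)\cdot R$, whose two descendant branches are separated by at least $(2/A)r > 2r$, so the $F_\bomega$-portions of the sub-intervals $I^\sigma$ require pairwise distinct covering balls of radius $r$. Thus $\cN_r(B(z,R) \cap F_\bomega) \ge |\Upsilon_r| \ge (\tau A/2)^{D-\epsilon/2}(R/r)^{D-\epsilon/2}$. Given arbitrary $C_1, C_2$, choosing $n_0$ large so that $(R/r)^{\epsilon/2} = A^{-n_0\epsilon/2}$ beats $C_2$ and then $N$ large so that $R < C_1$ produces a violation of $\overline{\dim}_\Phi F_\bomega \le D - \epsilon$; letting $\epsilon$ decrease through a countable sequence yields the bound.

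The inequality $\underline{\dim}_\Phi F_\bomega \le d$ will follow by the dual argument using the upper-bound companion of Lemma \ref{lem:geometric}: when $\sum_j c_{j,\ell}^s \le 1$ for all $\ell$, the telescoping identity $\sum_{\sigma \in \Upsilon_r} c_\sigma^s \le 1$ combined with the lower bound $c_\sigma > (2/\tau)(r/R)$ produces $|\Upsilon_r| \le (\tau/2)^s (R/r)^s$, and using $O(1)$ balls of radius $r$ per sub-interval gives $\cN_r(I_N(\bomega) \cap F_\bomega) \le C(R/r)^s$. The mirrored Borel--Cantelli step yields infinitely many $N$ with $d_{\bomega_{N+\ell}} \le d + \epsilon/2$ for $\ell = 1,\dots,n_0$. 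The key new step is localization: I plan to choose $R = \tau |I_{N-1}(\bomega)|/4$ and $z \in F_\bomega \cap I_N(\bomega)$ so that the separation condition forces $B(z,R) \cap F_\bomega \subseteq I_N(\bomega)$, because siblings of $I_N(\bomega)$ lie at distance $\tau |I_{N-1}(\bomega)|$ and more distant Moran intervals are still further away. This converts the interior bound into $\cN_r(B(z,R) \cap F_\bomega) \le C (R/r)^{d+\epsilon/2}$, and the same choice of $n_0$ and $r$ as before produces the contradiction with $\underline{\dim}_\Phi F_\bomega \ge d + \epsilon$.

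The main technical obstacle is calibrating the block length $n_0$ against the $\Phi$-scale window $r \le R^{1+\Phi(R)}$: for small $\Phi$ the factor $R^{\Phi(R)} = |\log R|^{-H(R)}$ tends to $1$ as $R \to 0$, so the required inequality $A^{n_0} \le R^{\Phi(R)}$ will hold for any fixed $n_0$ once $R$ is small enough, uniformly in the choice of small $\Phi$. Since the block length $n_0$ needed to defeat a given $C_2$ depends only on $A$, $\epsilon$, and $C_2$ and not on $\Phi$, and the scales $R, r$ are produced by the random construction alone, the set $\Gamma$ obtained from the countable intersection over rational $\epsilon > 0$ and integers $n_0 \ge 1$ serves simultaneously for every admissible $(C_1, C_2, \Phi)$.
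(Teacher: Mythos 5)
Your two sandwich inequalities ($\overline{\dim}_\Phi F_\bomega \le D$ via Theorem \ref{thm:deprange} and the support inequality, $\underline{\dim}_\Phi F_\bomega \ge d$ via a doubling measure) are exactly the paper's argument, and the combinatorial core of your hard direction (Borel--Cantelli to find blocks of levels on which $\sum_j a_j^{D-\epsilon/2}\ge 1$, then Lemma \ref{lem:geometric} on the subtree below a level-$N$ interval, with the separation condition converting $|\Upsilon_r|$ into a covering bound) is also the right one. But there is a genuine gap in your calibration step, and it sits precisely where the difficulty of the theorem lives.

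You assert that for a small dimension function $R^{\Phi(R)}=|\log R|^{-H(R)}\to 1$ as $R\to 0$, so that a \emph{fixed} block length $n_0$ eventually yields admissible pairs $r=A^{n_0}R\le R^{1+\Phi(R)}$. This is false: $|\log R|^{-H(R)}=\exp(-H(R)\log|\log R|)$, and smallness only requires $H(R)\to 0$, not $H(R)\log|\log R|\to 0$. For the paper's own example $\Phi(t)=(\log|\log t|)^{1/\alpha}/|\log t|$ with $\alpha>1$ (which is small, since $H(t)=(\log|\log t|)^{1/\alpha-1}\to 0$), one has $R^{\Phi(R)}=\exp\left(-(\log|\log R|)^{1/\alpha}\right)\to 0$. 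Hence for any fixed $n_0$ the ratio $r/R=A^{n_0}$ eventually exceeds $R^{\Phi(R)}$, your witness pairs fall outside the window $r\le R^{1+\Phi(R)}$, and they say nothing about $\overline{\dim}_\Phi$. A fixed block length only proves the result for $\Phi$ dominated by $1/|\log t|$, i.e.\ for the Assouad dimensions.

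The repair, which is the paper's actual argument, is to let the block length grow with $N$: one descends $\chi_{N,i}\approx J_i\log(N|\log A|)/|\log B|$ levels below $I_N$, so that $|I_n|/|I_N|\le B^{\chi_{N,i}}\le |I_N|^{\Phi_i(|I_N|)}$ for the comparison functions $\Phi_i(t)=J_i\log|\log t|/|\log t|$, and then exhausts all small $\Phi$ via the monotonicity $\Phi\le\Phi_i$ near $0$ and $\overline{\dim}_\Phi\ge\overline{\dim}_{\Phi_i}$. The price is that the probability of a good block is now $\eta_i^{\chi_{N,i}}\to 0$, so your appeal to the second Borel--Cantelli lemma over blocks of fixed success probability $p^{n_0}>0$ no longer applies; one must instead tune $J_i$ so that $\eta_i^{\chi_{N,i}}\ge 1/N$, sample at $N_k=k\log k$ so that $\sum_k 1/(k\log k)$ diverges, and keep the blocks disjoint to retain independence. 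This calibration is the essential content missing from your proposal, and the same defect affects your dual argument for $\underline{\dim}_\Phi F_\bomega\le d$ (whose localization step $B(z,\tau|I_{N-1}|/4)\cap F_\bomega\subseteq I_N$ and dual counting lemma are otherwise reasonable, and which the paper leaves as ``similar'').
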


\begin{proof}
We show the result for the upper dimension as the one for the lower is similar.

\subsubsection*{Proof of the upper bound $\overline{\dim}_\Phi F_\bomega \le D$} \quad
The upper bound is a simple consequence of Theorem \ref{thm:deprange} as we can choose probabilities
so that $D = \overline{\dim}_\Phi \mu_\bomega \geq  \overline{\dim}_\Phi F_\bomega$.

\subsubsection*{Proof of the lower bound $\overline{\dim}_\Phi F \ge D$}\quad

\smallskip

From the definition of $D$, for each $i \in \N$ there is some $0 < \eta_i < 1$ such that
\[
    P( \omega \in \Omega_0 :  \sum_{j=1}^K a_j^{D - 1/(2i)}(\omega)  > 1 ) \ge \eta_i.
\]
Let
\[
    J_i = \frac{\vert \log B\vert}{K \vert \log \eta_i \vert} \quad \mbox{ and } \quad \Phi_i(t) = \frac{ J_i \log\vert \log t \vert}{\vert \log t \vert}.
\]
For each positive integer $N$, let
\[
    \chi_{N,i} = \frac{ J_i \log( N \vert \log A \vert)}{\vert \log B \vert}.
\]
Clearly, $\chi_{N,i} \to \infty$ as $N \to \infty$ and the definitions ensure that $\eta_i^{\chi_{N,i}} \ge 1/N$ for large enough $N$.  Set
\[
    \Gamma_{N,i} = \{ \bomega \in \Omega : \sum_{j=1}^K a_j(\bomega_\ell)^{D - 1/(2 i)} \ge 1  \mbox{ for } \ell = N+1, \ldots, N+\chi_{N,i} \}.
\]
As the sets of scaling factors $(a_1(\bomega_n), a_2(\bomega_n), \ldots, a_K(\bomega_n) )$ are independent for different $n$,
\[
   \P(\Gamma_{N,i}) = \prod_{\ell=N+1}^{N+\chi_{N,i}} \P( \bomega \in \Omega : \sum_{j=1}^K a_j(\bomega_\ell)^{D - 1/(2i)} \ge 1 ) \ge \eta_i^{\chi_{N,i}} \ge 1/N
\]
for all large enough $N$.
Thus if we let $N_k = k \log k$, then for some suitable large $K_0$, 
\[
    \sum_k \P(\Gamma_{N_k,i}) \ge \sum_{k \ge K_0} \frac{1}{k \log k} = \infty.
\]
As we can replace $\eta_i$ with any smaller, strictly positive number, there is no loss in generality in assuming it is so small so that $N_{k+1} > N_k + \chi_{N_k,i}$.
Hence, the events $\Gamma_{N_k,i}$ are independent and thus the Borel-Cantelli lemma implies that $\P( \Gamma_{N_k,i} \, i.o.) = 1$ for each (fixed) $i$.
Let $\Gamma_i$ be this set of full measure.

Take any $\bomega \in \Gamma_i$ and consider any Moran interval, $I_{N}(\bomega)$, of step $N := N_k$.
Let $I_n(\bomega)$ be the largest descendant of  $I_{N}$ at level $n = N + \chi_{N,i}$.
Since $|I_{N}| \ge A^{N}$ and the function $t^{\Phi_i(t)}$ decreases as $t$ decreases to $0$, the choice of $\chi_{N,i}$ ensures
\[
    |I_N|^{\Phi_i(|I_N|)} \ge A^{N \Phi_i(A^N)} = A^{\frac{J_i \log(N \vert \log A \vert)}{\vert \log A \vert}} = B^{\chi_{N,i}} \ge \frac{\vert I_n \vert}{\vert I_N \vert}.
\]
Hence $|I_n| \le |I_N|^{1 + \Phi_i(|I_N|)}$.

The idea is that now we use Lemma \ref{lem:geometric} on the part of the ``tree'' used in the construction of $F_\bomega$ which is ``below'' $I_N$,
on the levels between $N$ and $n = N + \chi_{N,i}$.
To this end, let $R = |I_N|$ and $r = |I_n|$ and $c_{j,\ell} = a_j(\bomega_{N-1+\ell})$ for $j=1,2,\ldots, K$ and $\ell =1,2,\ldots, \chi_{N,i}$.
Notice that by our choice of $N = N_k$, we have $\sum_j c_{j,\ell}^{D-1/(2i)} \ge 1$ for each $\ell=1,2,\ldots, \chi_{N,i}$.
Furthermore, since $I_n$ is the largest descendant of $I_N$, we have $r = |I_n| \ge c_\sigma |I_N| = c_\sigma R$ for all $\sigma$ with $|\sigma|=\chi_{N,i}$.

We now apply Lemma \ref{lem:geometric}  with $s = D - 1/(2 i)$ and obtain that
\[
   |\Upsilon_r| \ge C (R/r)^{D - 1/(2i)} = C (|I_N|/|I_n|)^{D-1/(2i)}.
\]
For any $\sigma \in \Upsilon_r$, we have $2 r < \tau R A c_{\hat{\sigma}} \le \tau |I_N| c_\sigma$ and thus the separation between the children of $I_{N+|\sigma|}$
is at least $2 r$ and so any $r$-covering of $I_N$ must have at least as many sets as there are children of the $I_{N+|\sigma|}$ for $\sigma \in \Upsilon_r$.
Thus 
\[
   \cN_r(I_N) \ge |\Upsilon_r| \ge C (|I_N|/|I_n|)^{D - 1/(2i)}.
\]

Since $\bomega \in \Gamma_i$, it follows that there is a sequence of integers $N_i \to \infty$ and $r_i \to 0$ with $r_i \le |I_{N_i}|^{1 + \Phi_i(|I_{N_i}|)}$
and 
\[
   \cN_{r_i}(I_{N_i}) \ge  C (|I_{N_i}|/r_i)^{D - 1/(2i)}.
\]
This means that $\overline{\dim}_{\Phi_i} F_\bomega \ge D - 1/i$ for all $\bomega \in \Gamma_i$.

Now let $\Gamma = \bigcap_{i=1}^{\infty} \Gamma_i$, a set of full measure, and assume that $\Phi$ is any small dimension function.  
Then there is some function $H(t) \to 0$ as $t \to 0$ so that
\[
   \Phi(t) \le \frac{H(t) \log\vert \log t \vert}{\vert \log t \vert} \mbox{ for all } t \le t_0.
\]
Consequently, for each $i$ there is some $t_i > 0$ such that $\Phi(t) \le \Phi_i(t)$ for all $t \le t_i$.
This property and our observations above ensure that
\[
   \overline{\dim}_\Phi F_\bomega \ge \overline{\dim}_{\Phi_i} F_\bomega \ge D - 1/i
\]
 for all $i$ and
$\bomega \in \Gamma$.  We conclude that $\overline{\dim}_\Phi F_\bomega \ge D$ for all $\bomega \in \Gamma$, as we desired to show.
\end{proof}

Since the  Assouad dimensions are  small $\Phi$ dimensions, an immediate corollary is the following.

\begin{corollary}
  Almost surely we have that $\dim_A F_\bomega = D$ and $\dim_L F_\bomega = d$.
\end{corollary}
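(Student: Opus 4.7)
The plan is to reduce the corollary to a direct specialization of Theorem \ref{thm:asdimset}. First, I observe that the constant function $\Phi(t) \equiv 0$ qualifies as a small dimension function in the sense of Section 2: taking $H(t) \equiv 0$ in the defining expression $\Phi(t) = H(t) \log|\log t|/|\log t|$ trivially satisfies $H(t) \to 0$ as $t \to 0$. Moreover, as noted in the discussion following Proposition \ref{prop:dimensionproperties}, whenever $\Phi$ is dominated by $1/|\log t|$ near $0$, the upper and lower $\Phi$ dimensions of a set coincide with the upper and lower Assouad dimensions respectively; applied to $\Phi \equiv 0$ this gives the identifications $\overline{\dim}_0 F_\bomega = \dim_A F_\bomega$ and $\underline{\dim}_0 F_\bomega = \dim_L F_\bomega$.

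Next, I invoke Theorem \ref{thm:asdimset}, which furnishes a single set $\Gamma \subset \Omega$ of full $\P$ measure on which $\overline{\dim}_\Phi F_\bomega = D$ and $\underline{\dim}_\Phi F_\bomega = d$ simultaneously for every small dimension function $\Phi$. Specializing the quantifier to $\Phi \equiv 0$ and combining with the identification from the previous paragraph yields $\dim_A F_\bomega = D$ and $\dim_L F_\bomega = d$ for each $\bomega \in \Gamma$, which is exactly the almost sure statement claimed.

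There is no substantive obstacle to overcome: all the real work, namely the upper bound obtained by choosing probabilities as in Theorem \ref{thm:deprange} and the lower bound built from the Geometric Lemma together with the Borel--Cantelli argument along the sparse sequence $N_k = k\log k$, is already packaged inside Theorem \ref{thm:asdimset}, whose conclusion is strictly stronger than what the corollary requires. The corollary is therefore a one-line specialization, and writing it out only requires flagging that $\Phi \equiv 0$ is an admissible small $\Phi$ and that for this choice the $\Phi$ dimensions agree by definition with the Assouad dimensions.
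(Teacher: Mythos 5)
Your proposal is correct and is exactly the paper's argument: the paper justifies the corollary with the single remark that the Assouad dimensions are small $\Phi$ dimensions (indeed, by definition they are the $\Phi\equiv 0$ dimensions), so the conclusion follows by specializing Theorem \ref{thm:asdimset} to $\Phi\equiv 0$. Your extra appeal to the ``dominated by $1/|\log t|$'' remark is harmless but unnecessary, since the identification $\dim_A=\overline{\dim}_0$ and $\dim_L=\underline{\dim}_0$ is already definitional in Section \ref{sec:phidim}.
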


Another corollary is the relationship between the almost sure value of $\underline{\dim}_\Phi \mu_\bomega$ and $\underline{\dim}_\Phi F_\bomega$, completing the relationship started in Proposition \ref{prop:lowerphi}.

\begin{corollary}
Almost surely we have $\underline{\dim}_\Phi \mu_\bomega \le \underline{\dim}_\Phi F_\bomega$.
\end{corollary}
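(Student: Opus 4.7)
The plan is that this corollary is essentially a direct composition of Proposition \ref{prop:lowerphi} and Theorem \ref{thm:asdimset}, so the proof should be very short. First I would invoke Proposition \ref{prop:lowerphi} to obtain a full-measure set $\Gamma_1 \subseteq \Omega$ on which $\underline{\dim}_\Phi \mu_\bomega \le \essinf_{\omega\in\Omega_0} d_\omega = d$ simultaneously for every small dimension function $\Phi$. Next I would invoke Theorem \ref{thm:asdimset} to obtain a full-measure set $\Gamma_2 \subseteq \Omega$ on which $\underline{\dim}_\Phi F_\bomega = d$ for every small $\Phi$.

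Then on the intersection $\Gamma = \Gamma_1 \cap \Gamma_2$, which still has full $\P$-measure, both statements hold for every $\bomega \in \Gamma$ and every small $\Phi$. Chaining the two gives
\[
  \underline{\dim}_\Phi \mu_\bomega \;\le\; d \;=\; \underline{\dim}_\Phi F_\bomega,
\]
which is exactly the claimed inequality.

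There is really no obstacle here; the only thing to double-check is that both preceding results provide a \emph{single} full-measure set which works uniformly in $\Phi$ (rather than a $\Phi$-dependent exceptional set), and a glance at their statements confirms this, so the intersection argument above is legitimate. Thus the corollary holds.
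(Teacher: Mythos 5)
Your proposal is correct and is exactly the argument the paper intends: the corollary is stated without proof immediately after Theorem \ref{thm:asdimset}, precisely because it follows by chaining $\underline{\dim}_\Phi \mu_\bomega \le d$ (Proposition \ref{prop:lowerphi}) with $\underline{\dim}_\Phi F_\bomega = d$ (Theorem \ref{thm:asdimset}) on the intersection of the two full-measure sets. Your check that both results furnish a single full-measure set valid uniformly over all small $\Phi$ is the right point to verify, and it does hold.
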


We emphasize that $\underline{\dim}_\Phi \nu \le \supp \nu$ is always true whenever $\nu$ is a doubling measure, but it is possible
that $\overline{\dim}_A \mu_\bomega = \infty$, in which case $\mu_\bomega$ is almost surely not doubling, see Remark \ref{ZeroInf}.

\subsection{The independent probabilities case}

Now we turn again to the problem of finding the range of the attainable a.s.\ measure dimensions.
A natural and particularly interesting example of our setup is when $\Omega_0$ is a product of
probability spaces, $\Gamma _{1}\times \Gamma _{2}$, with the product
measure. Let $\omega =(u,v)$ with $u\in \Gamma _{1}$ and $v\in \Gamma _{2},$
and suppose that $\mathcal{I}_{\omega }$ depends only on $u$ and
$\mathcal{P}_{\omega }$ only on $v$. 
Then the contraction factors, which depend only on $u$, 
and the probabilities which depend only on $v$, are independent of each
other. We call this the\textit{\ independent probabilities} case.

This situation turns out to be very interesting and quite different from the general case in 
Section \ref{sec:dependent} where the probabilities are allowed to depend on the scaling factors.
In this case, we will see that typically the smallest attainable almost sure upper $\Phi $ dimension 
of the random measures is strictly greater than the upper Assouad dimension of the random set for small
dimension functions $\Phi$; that is, there is a ``gap'' between the attainable measure dimensions
and the dimension of the underlying set.
A similar gap also typically exists for the lower $\Phi$ dimensions where the largest attainable almost sure value is strictly
smaller than the lower Assouad dimension of the random set.

Given contraction factors $a_i(\omega)$ for $i=1,2,\ldots, K$ and 
 independently chosen probabilities $p_j(\omega)$ for $j=1,...K$, we set 
\begin{align*}
   \overline{a}_j &=\esssup a_{j}(\omega ), &  \overline{p}_j &=\esssup p_{j}(\omega ),\\
   \underline{a}_j &=\essinf a_{j}(\omega ), &  \underline{p}_j &=\essinf p_{j}(\omega ),\\
\end{align*}
Note that in this situation the formulas from Theorem \ref{asdimmeasure} become
\begin{equation*}
\overline{\dim }_{\Phi }\mu_\bomega=
 \max_{j=1,...,K}\left( \frac{\log \underline{p}_{j}}{\log \overline{a}_{j}}\text{ }\right) \text{ a.s.}
\end{equation*}%
and 
\begin{equation*}
\underline{\dim }_{\Phi }\mu_\bomega=
\min_{j=1,...,K}\left( \frac{\log \overline{p}_{j}}{\log \underline{a}_{j}}\text{ }\right) \text{ a.s.}
\end{equation*}

\begin{theorem}\label{thm:asindrange}
Let $\Delta \ge 0$  be defined by
\[
    \sum_{j=1}^K \overline{a}_j^\Delta = 1.
\]
Then $\overline{\dim}_\Phi \mu_\bomega \ge \Delta$ almost surely whenever the probabilities
 are chosen independently from the scaling factors.
Furthermore, given any $R \in [\Delta, \infty)$, there is a choice of independent probabilities 
such that $\overline{\dim}_\Phi \mu_\bomega = R$.

Similarly, if $\delta$ satisfies 
\[
  \sum_j \underline{a}_j^\delta = 1,
\] 
then $\underline{\dim}_\Phi \mu_\bomega \le \delta$ almost
surely whenever the probabilities are chosen independently.
Further, for all $r \in (0,\delta]$ there is a choice of independent probabilities
so that $\underline{\dim}_\Phi \mu_\bomega = r$ almost surely.
\end{theorem}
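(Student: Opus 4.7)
The plan is to exploit the two simplified formulas for $\overline{\dim}_\Phi \mu_\bomega$ and $\underline{\dim}_\Phi \mu_\bomega$ displayed immediately before the theorem, which follow from Theorem \ref{asdimmeasure} specialised to the independent probabilities setting. Both the inequality $\overline{\dim}_\Phi \mu_\bomega \ge \Delta$ and its mirror $\underline{\dim}_\Phi \mu_\bomega \le \delta$ will reduce to simple algebraic consequences of the normalisation $\sum_j p_j(v) = 1$, while the attainability parts will be proved by an explicit construction with deterministic (hence trivially independent) probability weights.

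For the lower bound $\overline{\dim}_\Phi \mu_\bomega \ge \Delta$, I would argue by contradiction. If $\max_j(\log \underline{p}_j / \log \overline{a}_j) < \Delta$, then since $\log \overline{a}_j < 0$ this forces $\underline{p}_j > \overline{a}_j^\Delta$ for every $j$, so that $\sum_j \underline{p}_j > \sum_j \overline{a}_j^\Delta = 1$. This contradicts $\underline{p}_j \le p_j(v)$ combined with $\sum_j p_j(v) = 1$ a.s. The upper bound $\underline{\dim}_\Phi \mu_\bomega \le \delta$ follows from the symmetric observation that $\sum_j \overline{p}_j \ge 1$.

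To realise a prescribed value $R \in [\Delta, \infty)$ as the almost sure upper $\Phi$ dimension, I would use deterministic probabilities $p_1 = \overline{a}_1^R$ and $p_j = \beta\, \overline{a}_j^R$ for $j \ge 2$, where $\beta$ is chosen so that $\sum_j p_j = 1$. Since $R \ge \Delta$ implies $\sum_j \overline{a}_j^R \le 1$, we get $\beta \ge 1$; a direct computation using $\log \overline{a}_j < 0$ then shows $\log p_j/\log \overline{a}_j \le R$ for $j \ge 2$ with equality at $j = 1$, so the maximum over $j$ equals exactly $R$. The construction for $r \in (0, \delta]$ is entirely analogous: set $p_1 = \underline{a}_1^r$ and $p_j = \alpha\, \underline{a}_j^r$ for $j \ge 2$ with $\alpha$ normalising. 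The inequality $\sum_j \underline{a}_j^r \ge 1$ (from $r \le \delta$) gives $\alpha \le 1$, and hence $\min_j \log p_j / \log \underline{a}_j = r$.

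I do not expect any serious obstacle in this argument. The moment condition $\ee(p_i^{-t}) < \infty$ is trivially satisfied for strictly positive constants; positivity of every $p_j$ follows from $\underline{a}_j, \overline{a}_j \in (0,1)$ together with $r, R > 0$; and the deterministic choice fits into the independent probabilities framework by inspection. The only mild point of care is keeping track of the signs of $\log \overline{a}_j$ and $\log \underline{a}_j$ when inverting the inequalities between $p_j$ and powers of $\overline{a}_j$ or $\underline{a}_j$.
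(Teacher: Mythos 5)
Your proof is correct and takes essentially the same route as the paper: both arguments rest on the specialised a.s.\ formulas for $\overline{\dim}_\Phi \mu_\bomega$ and $\underline{\dim}_\Phi \mu_\bomega$ stated just before the theorem, exploit the normalisation $\sum_j p_j(\omega)=1$ (giving $\sum_j \underline{p}_j\leq 1$ and $\sum_j \overline{p}_j\geq 1$) for the bounds, and realise prescribed values via explicit deterministic weights. The only cosmetic differences are that you obtain the bound $\overline{\dim}_\Phi\mu_\bomega\geq\Delta$ by contradiction (summing $\underline{p}_j>\overline{a}_j^{\Delta}$) where the paper argues directly via the substitution $\overline{a}_j=\overline{a}_1^{s_j}$, and that you normalise your weights with a multiplicative constant $\beta$ where the paper adjusts a common exponent by continuity; both variants are equally valid.
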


\begin{proof}
Without loss of generality, assume 
\begin{equation*}
  \overline{\dim }_{\Phi }\mu _{\bomega }=\frac{\log \underline{p}_{1}}{\log \overline{a}_{1}}
     \quad  \text{a.s.}
\end{equation*}
Define $s_{j}$ by $\overline{a}_{j}=\overline{a}_{1}^{s_{j}}$. 
Since
\[
    \frac{\log \underline{p}_{j}}{\log \overline{a}_{j}}\leq \frac{\log \underline{p}_{1}}{\log \overline{a}_{1}},
\]
$\underline{p}_{j}\geq \underline{p}_{1}^{s_{j}}$. 
Also, $\sum_{j=1}^{K} \underline{p}_{j}\leq 1$ since $%
\underline{p}_{j}\leq p_{j}(\omega )$ a.s. and $\sum_{j}p_{j}(\omega )=1$ for all $%
\omega \in \Omega_0$. 
Hence
\begin{equation*}
\sum_{j=1}^{K} \underline{p}_{1}^{s_{j}}\leq \sum_{j} \underline{p}_{j}\leq
1=\sum_{j} \overline{a}_{j}^{\Delta}=\sum_{j} \overline{a}_{1}^{s_{j}\Delta}.
\end{equation*}%
That implies $\underline{p}_{1}\leq \overline{a}_{1}^{\Delta}$ and therefore 
$\Delta\leq \log \underline{p}_{1}/\log \overline{a}_{1} $. 
Thus $\Delta$ is a lower bound on the a.s. upper $\Phi $ dimension of
the random measures.

To see that each $r\in [\Delta,\infty)$ arises as the a.s. dimension of a
random measure simply take $p_1=\overline{a}_{1}^{r}$. 
Since $r\geq \Delta,$ there must be some $r_{j} \leq \Delta$ such that 
\[
   \underline{p}_{1}+\sum_{j=2}^{K} \overline{a}_{j}^{r_{j}}=1.
\]
Setting $p_{j}=\overline{a}_{j}^{r_{j}}$ for $j=2,3,\ldots, K$ completes the definition 
of a (constant) set of probabilities that we can choose
independently of the $a_i(\omega)$.
Clearly 
\[
   \max_{j=1,...,K}\left( \frac{\log p_j}{\log \overline{a}_{j}}\, \right) =r.
\]

The argument for the lower $\Phi $ dimension is symmetric.
\end{proof}

\begin{remark}
Since $\overline{a}_{j}\geq a_{j}(\omega )$ a.s., it is clear that almost surely 
$\Delta\geq\dim _{A}\F_{\bomega }=D$ and typically is strictly greater. 
For example, if the underlying probability space is finite, then $\Delta=D$ if and only if there
is some $\omega \in \Omega_0$ such that $\overline{a}_{j}=a_{j}(\omega)$ for each $j=1,...,K$.

As a simple example, suppose we use only the two different pairs of contraction factors 
$(a_1 = 1/2, a_2 = 1/4)$ and $(b_1= 1/4, b_2= 1/2)$ and choose from these equally likely.
Then almost surely
\[
    d  = D = \dim_A F_\bomega = \dim_L F_\bomega = \dim_H F_\bomega = \frac{\ln( \sqrt{5}/2 + 1/2)}{\ln(2)} \approx 0.6942,
\]
which is the positive solution to $1/2^x + 1/4^x = 1$ and is $d_\omega$ for both sets of contraction factors.
However, the minimal attainable almost sure upper measure dimension is $\Delta = 1$ (the solution to $1/2^x + 1/2^x = 1$)
and the maximal attainable almost sure lower measure dimension is $\delta = 1/2$ (the solution to $1/4^x + 1/4^x=1$).

\smallskip
\begin{center}
\begin{tikzpicture}[scale=0.5]

      \draw[thick]  (0,0) -- (15,0);
      \draw[fill=black] (6.942,0) circle (3 pt); \node[above] at (6.942,0) {$\dim F_\bomega$};
      \node[below] at (6.942,-0.2) {0.6942};

      \draw[black] (10,-0.2) -- (10,0.4); \node[above right] at (10,0) {attainable $\overline{\dim}_\Phi \mu_\bomega$};
      \draw[line width=1mm,->] (10,0) -- (15,0);
      \node[below] at (10,-0.2)  {1};

      \draw[black] (5,-0.2) -- (5,0.4); \node[above left] at (5,0) {attainable $\underline{\dim}_\Phi \mu_\bomega$};
      \draw[line width=1mm] (0,0) -- (5,0);
      \node[below] at (5,-0.2) {0.5};
      \draw[black] (0,-0.2) -- (0,0.4);  \node[below] at (0,-0.2) {$0$};

\end{tikzpicture}
\end{center}
\smallskip
This setup corresponds to choosing randomly between the two IFSes
\[
    \{ w_0(x) = x/2, w_1(x) = x/4+3/4 \} \quad \mbox{ and } \quad \{ w_0(x) = x/4, w_1(x) = x/2 + 1/2 \}
\]
and using the same probabilities $p$ and $1-p$ for both.

Notice that if we allow the values of $a_{1}$ and $b_{2}$ to tend to one rather than each being $1/2$
(which can be done while sending $a_2$ and $b_1$ to zero), then we can arrange for $\Delta$ to tend to infinity, while,
of course, the Assouad dimension of any set in $\mathbb{R}$ is never more
than $1$.
This means that the ``gap'' between the attainable values of the upper measure dimension and the upper dimension of the underlying set
can be arbitrarily large.

\end{remark}

\medskip

\begin{remark}
\label{ZeroInf}Whether or not the value $r=\infty $ (or $r=0$) can be
achieved depends on the underlying probability space. Since the contraction
factors are bounded away from zero (and one), in the independent probability
case we can only obtain the value infinity (or zero) as an upper (resp.,
lower) small $\Phi $ dimension if it is possible for some $\underline{p}_{k}=0$,
 (resp., $\overline{p}_{k}=1$). 
Simultaneously, for each $j$ there would need to be some $t>0$
such that $\mathbb{E(}p_{j}^{-t})<\infty$. 
This is impossible to achieve when the probability space is finite, but is possible, 
for example, if the underlying probability space is $[0,1]$ with Lebesgue measure. 
A similar statement can be made in the case of dependent probabilities.
\end{remark}

\begin{remark}
As $\sum_{j} \underline{a}_{j}<1,$ we know that $\delta $ is the dimension 
(Assouad and Hausdorff) of any strongly separated Moran set which has 
$\underline{a}_{j}$ as its contraction factors. 
Similarly, in the case when $\sum_{j=}^{K} \overline{a}_{j}<1,$ then $\Delta$ is the dimension 
of any strongly separated Moran set with $\overline{a}_{j}$ as contraction factors. 
However, in general, such Moran sets are not any of the possible random
Moran sets in the construction, so it is unclear if there is a geometric
reason why they might be relevant. 
Of course, $\sum \overline{a}_{j}\geq 1$ is also possible. 
In this case there is still an associated self-similar set with these contraction factors, but not satisfying the open set condition. 
$\Delta$ is not the dimension of this set, in general. 
\end{remark}

\section{Extensions of these results}
\label{sec:extensions}

We assumed in our setup in Section \ref{sec:construction} that the number of similarities, 
$K,$ was the same for each $\mathcal{I}_{\omega }$. 
This is only a convenience; all the results go through with the obvious notational modifications
provided we only assume the number of similarities are bounded by $K$.
(Since we require that the contraction factors are bounded away from $0$ and
there is a gap of size at least $\tau $ between step one Moran intervals,
boundedness is necessary.)

We also assumed that the similarities acted on $[0,1]$. Again this was only
a convenience - any compact subset of $\mathbb{R}$, or even $\mathbb{R}^{d},$
could replace $[0,1]$. Of course, in $\mathbb{R}^{d}$ for $d\geq 2$ the
Moran sets are not intervals and the contraction factors determine the
diameters of the Moran sets rather than their lengths.

We do not even require the sets $\mathcal{I}_{\omega }$ to be collections of
similarities. It is enough to define 
$\mathcal{I}_{\omega }=\{a_{1}(\omega),...,a_{K}(\omega )\}$,
 with the same requirements on the random variables $a_{j}(\omega )$ 
as we had before for the contraction factors. Give $\omega\in \Omega $, 
we define the random Moran set ${F}_\bomega$
inductively beginning with the closed interval $I_{0}=[0,1]$. At the first
step, we choose $K$ closed subintervals  
$I_{1}^{(i)}(\omega _{1})\subset I_{0},$ $i=1,...,K$, with 
$|I_{1}^{(i)}|=a_{i}(\bomega_1)$
and $d(I_{1}^{(i)},I_{1}^{(j)})\geq \tau $ for $i\neq j$. 
These are the step  one Moran intervals, the children of $I_{0}$. 
We denote by $F_\bomega^{(1)}$  the union of the $K$ step one Moran
intervals.  
Having inductively created the $K^{n-1}$ Moran intervals of
step $n-1$, we choose $K$ closed subintervals of each such interval $I$ of
sizes $a_{i}(\bomega_{n})|I|$ for $i=1,...,K$,
 with the separation between any two of the children of $I$ being at least 
$\tau |I|$. 
We remark that this is all we require of the children. 
In particular, the relative sizes of corresponding children of two different level $n$ Moran
intervals have to be the same, but not the spacing between them. 
This produces $K^{n}$ Moran intervals of step $n$ whose union we denote by ${F}_{\bomega }^{(n)}$,
a closed subset of $[0,1]$. The random Moran set\textbf{\ }${F}_{{%
\boldsymbol{\omega }}}$ is the compact set 
\[
   {\F}_\bomega=\bigcap_{n=1}^{\infty }{F}_\bomega^{(n)}. 
\]
The random measure $\mu _{\bomega }$ is defined in the same manner as before.
All the theorems hold in this more general setting, with the same proofs.
Again we have the two cases of probabilities which are either dependent or independent
of the scaling factors.


\begin{thebibliography}{99}


\bibitem{BHS} M. Barnsley, J. E. Hutchinson, and \"O. Stenflo, \textit{$V$-variable fractals: dimension results},
Forum Math., \textbf{24}(2012), 445-470.

\bibitem{Fa} K. Falconer, \textit{Random fractals}, Math. Proc. Camb. Phil.
Soc., \textbf{100}(1986), 559-582.


\bibitem{F} J. M. Fraser, \textit{Assouad dimension and fractal
geometry}, Cambridge Tracts in Mathematics 222, Camb. Univ. Press, 2020.

\bibitem{FH} J. M. Fraser and D.~Howroyd, \textit{On the upper regularity
dimension of measures}, Indiana Univ. Math. J., \textbf{69}(2020), 685-712.

\bibitem{FMT} J. M. Fraser, J.-J. Miao and S. Troscheit, \textit{The Assouad
dimension of randomly generated fractals}, Ergodic Theory Dynam. Systems, 
\textbf{38}(2018), 982-1011.

\bibitem{FT} J. M. Fraser and S. Troscheit, \textit{Assouad spectrum of
random self-affine carpets}, Ergodic Theory Dynam. Systems, \textbf{41}%
(2021), 2927-2945.

\bibitem{FY} J.~M.~Fraser and H. Yu, \textit{New dimension spectra: finer
information on scaling and homogeneity}, Adv. Math., \textbf{329(}2018),
273-328.


\bibitem{GHM} I. Garci\'{a}, K.E. Hare and F. Mendivil, \textit{Intermediate
Assouad-like dimensions}, J. Fractal Geometry, \textbf{8}(2021), no. 3,
201-245.

\bibitem{GHM2} I. Garci\'{a}, K.E. Hare and F. Mendivil, \textit{Almost sure
Assouad-like dimensions of complementary sets}, Math. Z., \textbf{298}%
(2021), no, 3-4, 1201-1220.

\bibitem{Gr} S. Graf, \textit{Statistically self-similar fractals}, Prob.
Theory Related Fields,\textbf{\ 74}(1987), 357-392.

\bibitem{Ham} B. M. Hambly, \textit{Brownian motion on a homogeneous random fractal},
Prob. Theory Related Fields, \textbf{94}(1992), 1-38.

\bibitem{HH} K.E. Hare and K.G. Hare, \textit{Intermediate Assouad-like
dimensions for measures}, Fractals, \textbf{28}(2020), no. 07, 205143.


\bibitem{HM} K.E. Hare and F. Mendivil, \textit{Assouad-like dimensions of a
class of random Moran measures}, J. Math. Anal. Appl. \textbf{508}(2022),
125912.

\bibitem{HM2} K.E. Hare and F. Mendivil, \textit{Assouad-like dimensions of a
class of random Moran measures. II. Non-homogeneous Moran sets}, J. Fractal Geom. \textbf{10}(2023),
351-388.

\bibitem{KL} A. K\"{a}enm\"{a}ki and J. Lehrb\"{a}ck, \textit{Measures with
predetermined regularity and inhomogeneous self-similar sets}, Ark. Mat., 
\textbf{55}(2017), 165--184.

\bibitem{KLV} A. K\"{a}enm\"{a}ki, J. Lehrb\"{a}ck and M. Vuorinen, \textit{%
Dimensions, Whitney covers, and tubular neighborhoods}, Indiana Univ. Math.
J., \textbf{62}(2013), 1861--1889.


\bibitem{MW} R.D. Mauldin and S.C. Williams, \textit{Random recursive constructions: asymptotic
geometric and topological properties}, Trans. Amer. Math. Soc., \textbf{295}(1986), 325-346.


\bibitem{Ol} L. Olsen, \textit{On the Assouad dimension of graph directed Moran fractals},
Fractals, \textbf{19}(2011), 221-226.

\bibitem{Tr17} S. Troscheit, \textit{On the dimensions of attractors of random self-similar graph directed iterated function systems},
J. Fractal Geom., \textbf{4}(2017), 257-303.

\bibitem{Tr} S. Troscheit, \textit{The quasi-Assouad dimension for
stochastically self-similar sets}, Proc. Royal Soc. Edinburgh A, \textbf{150}%
(2020), 261-275.

\bibitem{Tr2} S. Troscheit, \textit{Assouad spectrum thresholds for some
random constructions}, Can. Math. Bull., \textbf{63}(2020), 434-453.

\end{thebibliography}
\end{document}